\title{Stability and Grothendieck}
\date{\today}
\author{Anand Pillay}
\newtheorem{Theorem}{Theorem}[section]
\newtheorem{Proposition}[Theorem]{Proposition}
\newtheorem{Remark}[Theorem]{Remark}
\newcommand{\R}{\mathbb R}
\begin{document}
\maketitle

\begin{abstract} We  interpret Grothendieck's double limit characterization of weak relative compactness \cite{Grothendieck}  in the model theoretic setting as: $\phi(x,y)$ does not have the order property in $M$ iff and only if every complete $\phi(x,y)$-type $p(x)$ over $M$ is {\em generically stable}. We give a proof and point out the connection with \cite{Pillay}.

\end{abstract}

\section{Introduction}
This note is a commentary on the model-theoretic interpretation of Grothendieck's double limit characterization of weak relative compactness, after having read  Ita\"{i} Ben-Yaacov's short paper \cite{Ben-Yaacov}  on the topic,  in the model theory seminar at Notre Dame.  Thanks to Gabriel Conant, Sergei Starchenko and members of the Notre Dame model theory seminar for discussions. 

\vspace{5mm}
\noindent
The Grothendieck result, Theorem 6 in \cite{Grothendieck},  is that if $X$ is a compact (Hausdorff) space, $X_{0}$ is a dense subset of $X$, and $A$ is a subset of $C(X)$, the (necessarily) bounded continuous functions on $X$, then the following are equivalent:
\newline
(a) the closure of $A$ in $C(X)$ with respect to the weak topology on $C(X)$ is compact (with respect to this weak topology on $C(X)$), and
\newline
(b) $A$ is bounded in $C(X)$, and if $f_{i}\in A$ and $x_{i}\in X_{0}$ (i=1,2,...), then  if both $lim_{i}lim_{j}f_{i}(x_{j})$ and $lim_{j}lim_{i}f_{i}(x_{j})$ exist then they are equal.

\vspace{5mm}
The classical model theory context is where $M$ is an $L$-structure, $\phi(x,y)$ an $L$-formula, $\phi^{*}(y,x)$ the same formula but with $y$ as the ``variable 
variable", $X = S_{\phi^{*}}(M)$, $X_{0} = \{tp_{\phi^{*}}(b/M): b\in M\}$ (the realized types), and $A$ the set of (continuous) functions from $X$ to $2$ given by formulas $\phi(a,y)$ for $a\in M$. We let $M^{*}$ be a saturated elementary extension of $M$.   Condition (b) says that $\phi(x,y)$ does not have the 
order property in $M$, namely there do NOT exist $a_{i},b_{i}$ in $M$ for $i<\omega$ such that for all $i,j$, $M\models \phi(a_{i},b_{j})$ iff $i\leq j$ or for all $i,j$, $M\models \neg\phi(a_{i}, b_{j})$ iff $i\leq j$.   
Now for condition (a):  Weak compactness  of a subset $B$ of the set of continuous functions from $X$ to $2$ is equivalent to pointwise compactness of $B$. Hence condition (a) says that whenever $f\in 2^{X}$ is in the closure of $A\subseteq 2^{X}$  (in the pointwise convergence, equivalently Tychonoff, topology on the space $2^{X}$ of {\em all} functions from $X$ to $2$) then $f$ is continuous, i.e. given by a $\phi^{*}$-formula  (namely a finite Boolean combination of $\phi(a,y)$'s  for $a\in M$).  

We will give a  quick  proof of this equivalence of (a) and (b) in the model-theoretic context (see Proposition 2.2 below).  In fact the proof will be Grothendieck's one (proof of (d) implies (a) of Theorem 2 in \cite{Grothendieck}), which he says is based on an idea of Eberlein, but amusingly, is also essentially  the proof of Proposition 3.1 from \cite{Pillay} where we proved that if $\phi(x,y)$ does not have the order property in $M$, and $a,b\in M^{*}$ then $tp_{\phi}(a/M,b)$ is finitely satisfiable in $M$ iff $tp_{\phi^{*}}(b/M,a)$ is finitely satisfiable in $M$.

As we point out, conditions (a), (b) in the model-theoretic context imply (and are equivalent to) the statement that every $p(x)\in S_{\phi}(M)$ has an extension $p'\in S_{\phi}(M^{*})$ which is both finitely satisfiable in and definable over $M$ (where moreover the $\phi(x,y)$-definition for $p$ is a $\phi^{*}$-formula over $M$). 

In \cite{NIPII}  we defined a {\em complete type}  $p(x)$ over a model $M$ to be generically stable if $p$ has an extension to a complete type $p'$ over $M^{*}$ which is finitely satisfiable in, and definable over $M$.  Under the assumption that $T$ has $NIP$, we showed in \cite{NIPII},  that generically stable complete types $p(x)$ have  additional properties, such as $p'$ being the unique nonforking extension of $p$. Subsequently in \cite{Pillay-Tanovic} an appropriate stronger definition of generic stability (of a complete type) was given in an arbitrary theory, in such a way that the additional properties are satisfied. 

So morally, the model-theoretic meaning of the Grothendieck theorem is that the formula $\phi(x,y)$ does not have the order property in $M$ if and only if every complete $\phi$-type $p(x)\in S_{\phi}(M)$ is generically stable.  And this was already implicit in \cite{Pillay} where we obtained  generic stability of every complete type over $M$ from ``$M$ has no order" (i.e. no formula $\phi(x,y)$ has the order property in $M$).  We will investigate later to what extent we can deduce the  stronger notions of generic stability from not the order property in $M$.
\\

Let is briefly give definitions of some of the functional analysis notions. Given a compact space $X$ and the real Banach space $C(X)$ of  continuous functions, let $L(C(X),\R)$ be the space of bounded linear functions on $C_{b}(X)$.  The weak topology on $C(X)$ is the one whose basic open neiighbourhoods of a point $f_{0}$ are of the form $\{f\in C(X): |g_{1}(f-f_{0})| < \epsilon, ..., |g_{r}(f-f_{0})|<\epsilon\}$ for some $\epsilon > 0$ and some finite set $g_{1},..,g_{n}$ from $L(C(X),\R)$. 
A basic fact (see Lemma D.3 of \cite{Kerr-Li}) is that a bounded subset $B$ of $C(X)$ is compact in the weak topology iff $B$ is compact in the ``pointwise convergence" topology, namely in the product topology on $D^{X}$ for a suitable compact interval in $\R$. 
It follows for example that if $B$ is a subset of the continuous functions from $X$ to $\{0,1\}$, then the closure of $B$ in $C(X)$ (which will be contained of course in $2^{X}$) in the  weak topology on $C(X)$ is compact iff the closure of $B$ in the space $2^{X}$ with the product topology consists of continuous functions.

\section{Theorem,  proof, and discussion}
Let us first fix notation. $\phi(x,y)$ is an $L$-formula, and $M$ an $L$-structure. $S_{\phi}(M)$ denotes the space of complete $\phi$-types over $M$ (in variable $x$).  $\phi^{*}(y,x)$ is $\phi(x,y)$ and $S_{\phi^{*}}(M)$ denotes the space of complete $\phi^{*}$-types over $M$ (in  variable $y$).  We let $X$ denote the space $S_{\phi^{*}}(M)$. A $\phi$-formula over $M$ is a (finite) Boolean combination of formulas $\phi(x,b)$ for $b$ in $M$. The $\phi$-formulas pick out the clopen subsets of $S_{\phi}(M)$. Likewise for $\phi^{*}$-formulas and $S_{\phi^{*}}(M)$.  Let $M^{*}$ be a saturated elementary extension of $M$ and $M^{**}$ a saturated elementary extension of $M^{*}$.
Any formula $\phi(a,y)$ with $a\in M$ can be evaluated at any $q\in X$ (i.e. truth value of $\phi(a,b)$ for some/any realization $b$ of $q$), and by definition the corresponding map $X\to 2$ is continuous. 

\begin{Remark} Let $f:X\to 2$ be in the closure of the set of functions $X\to 2$ given by formulas $\phi(a,y)$ for $a\in M$ (in the product topology on $2^{X}$). Then there is $a^{*}\in M^{**}$ such that $tp(a^{*}/M^{*})$ is finitely satisfiable in $M$ (in  particular $M$-invariant), and for $q\in X$, $f(q)$ is the value (true or false) of $\phi(a^{*},b)$ for some/any $b\in M^{*}$ realizing $q$.  Conversely any such $a^{*}$ yields in this way a function $X\to 2$ in the closure of the set of functions given by $\phi(a,y)$ for $a\in M$. 
\end{Remark}

Modulo the discussion of weak compactness in Section 1, the equivalence of (a) and (b) below is precisely the statement of Grothendieck's theorem in the classical model-theoretic environment.

\begin{Proposition} The following are equivalent.
\newline
(a) If $f\in 2^{X}$ is in the closure in the pointwise convergence topology (equivalently product topology) of the set of functions given by $\phi(a,y)$ for $a\in M$, then $f$ is continuous, so given by a $\phi^{*}$-formula over $M$.
\newline
(b) $\phi(x,y)$ does not have the order property in $M$.
\end{Proposition}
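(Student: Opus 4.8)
The plan is to prove the contrapositive of each implication, throughout exploiting the Remark, which identifies a function $f$ in the pointwise closure of $A=\{\phi(a,\cdot):a\in M\}$ with an element $a^{*}\in M^{**}$ such that $tp(a^{*}/M^{*})$ is finitely satisfiable in $M$ and $f(q)$ is the truth value of $\phi(a^{*},b)$ for $b\in M^{*}$ realizing $q$. Throughout I write $X_{0}=\{tp_{\phi^{*}}(b/M):b\in M\}$ for the (dense) set of realized types and $g:=f|_{X_{0}}$.

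For $\neg(b)\Rightarrow\neg(a)$ I would argue directly. Suppose $\phi$ has the order property in $M$, witnessed by $a_{i},b_{i}\in M$ with $\phi(a_{i},b_{j})$ holding iff $i\leq j$. Fix nonprincipal ultrafilters $\mathcal{U},\mathcal{V}$ on $\omega$ and set $f=\lim_{\mathcal{U}}\phi(a_{i},\cdot)\in 2^{X}$, which lies in the closure of $A$, together with $q^{*}=\lim_{\mathcal{V}}tp_{\phi^{*}}(b_{j}/M)\in X$. Since $\{i:i\leq j\}$ is finite, $f(tp_{\phi^{*}}(b_{j}/M))=0$ for every $j$; since each $\phi(a_{i},\cdot)$ is continuous and $\{j:i\leq j\}$ is cofinite, $\phi(a_{i},\cdot)$ takes value $1$ at $q^{*}$ for every $i$, whence $f(q^{*})=1$. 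Were $f$ continuous we would obtain $f(q^{*})=\lim_{\mathcal{V}}f(tp_{\phi^{*}}(b_{j}/M))=0$, a contradiction; so $f$ is a discontinuous element of the closure and (a) fails.

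The substance is $\neg(a)\Rightarrow\neg(b)$. Suppose some $f$ in the closure is discontinuous and represent it by $a^{*}$ as above. First I would locate a good pivot: I claim that either there is $q^{*}\in f^{-1}(1)$ lying in $\overline{g^{-1}(0)}$, or dually some $q^{*}\in f^{-1}(0)$ lying in $\overline{g^{-1}(1)}$. Indeed, if both alternatives failed then $\overline{g^{-1}(0)}\subseteq f^{-1}(0)$ and $\overline{g^{-1}(1)}\subseteq f^{-1}(1)$; as $X_{0}$ is dense these two closures cover $X$, and since $f^{-1}(0),f^{-1}(1)$ partition $X$ this forces $f^{-1}(0)=\overline{g^{-1}(0)}$ and $f^{-1}(1)=\overline{g^{-1}(1)}$, making both sets clopen and $f$ continuous, contrary to assumption. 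Replacing $\phi$ by $\neg\phi$ if necessary, assume the first alternative and fix $b^{*}\in M^{*}$ realizing $q^{*}$, so that $\phi(a^{*},b^{*})$ holds while every clopen neighbourhood of $q^{*}$ contains a realized type $tp_{\phi^{*}}(b/M)$ with $\neg\phi(a^{*},b)$. With the pivot in hand I would build $a_{i},b_{i}\in M$ by an Eberlein-style alternation, maintaining that $\phi(a_{i},b^{*})$ holds for every chosen $a_{i}$, that $\neg\phi(a^{*},b_{j})$ holds for every chosen $b_{j}$, and that $\phi(a_{i},b_{j})$ holds iff $i\leq j$ among the elements chosen so far. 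To pick $a_{n}$ I apply finite satisfiability of $tp(a^{*}/M^{*})$ to realize $\phi(x,b^{*})\wedge\bigwedge_{j<n}\neg\phi(x,b_{j})$ in $M$, which makes $\phi(a_{n},b_{j})$ false for $j<n$. To pick $b_{n}$ I note that the clopen set $\{q:\bigwedge_{i\leq n}\phi(a_{i},b_{q})\}$ contains $q^{*}$ by the first invariant, so by density and the pivot property it contains a realized type with $\neg\phi(a^{*},\cdot)$; taking $b_{n}\in M$ realizing it makes $\phi(a_{i},b_{n})$ hold for all $i\leq n$. The invariants then guarantee $\phi(a_{i},b_{j})$ holds iff $i\leq j$ for all $i,j$, so $\phi$ has the order property in $M$ and (b) fails.

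The one genuinely delicate point, and the main obstacle, is that discontinuity of $f$ may be witnessed only at non-realized types, whereas the order property must be exhibited by elements $b_{j}$ of $M$, that is, by realized types; a naive alternation can stall because a clopen set may contain a non-realized $0$-point of $f$ yet no realized one. The pivot claim is exactly what removes this obstacle: it produces a single $b^{*}\in M^{*}$ near which realized $0$-points genuinely accumulate, after which finite satisfiability of $tp(a^{*}/M^{*})$ together with density of $X_{0}$ keep the alternation going forever. This is Eberlein's device as used by Grothendieck, and is the same mechanism underlying the symmetry of finite satisfiability in Proposition 3.1 of \cite{Pillay}.
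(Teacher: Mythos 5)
Your proposal is correct and follows essentially the same route as the paper: the hard direction is exactly Grothendieck's alternating construction of $a_{i},b_{i}$ using finite satisfiability of $tp(a^{*}/M^{*})$ and the accumulation of realized types with the opposite truth value near $b^{*}$, with your ``pivot'' claim being a careful justification of the step the paper dismisses with ``it follows immediately.'' The only (harmless) variation is in the easy direction, where you take ultrafilter limits in both coordinates rather than extracting a convergent subsequence, which if anything avoids having to justify that such a subsequence exists.
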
 
\begin{proof} First the ``easy" direction (a) implies (b). Assume (a), and suppose (b) fails, namely  $\phi$ does not have the order property in $M$ witnessed without loss of generality by  $a_{i}, b_{i}$ in $M$ for $i<\omega$ such that $M\models \phi(a_{i},b_{j})$ iff $i\leq j$.  By (a) there is a subsequence $a_{j_{i}}$ $i<\omega$ such that the functions $\phi(a_{j_{i}},y)$ converge pointwise to some $\phi^{*}$-formula $\psi(y)$ over $M$. This means that for every $b\in M^{*}$, the value of $\psi(b)$  is the eventual value of $\phi(a_{j_{i}},b)$.  Clearly we have to have $\models \neg\psi(b_{i})$ for all $i$, so  by compactness we can find $b\in  M^{*}$ such that $\models\neg\psi(b)$ and $\models \phi(a_{i},b)$ for all $i$. This is a contradiction. 
\newline
Now (b) implies (a). 
We assume that (a) fails.  It follows immediately that there is an $f\in 2^{X}$ which is in the closure of the set of of $\phi(a,y)$ for $a\in M$, and there is $q\in S_{\phi^{*}}(M)$ such that for every neigbourhood $U$ of $q$ there is
$b\in M$, $tp_{\phi^{*}}(b/M)\in U$, such that $f(q) \neq f(tp_{\phi^{*}}(b/M))$.  Translating, and using Remark 2.1, this means that there are $a^{*}\in M^{**}$ and $b^{*}\in M^{*}$ such that
\newline 
(*)  for every $\phi^{*}$-formula $\psi(y)$ over $M$ satisfied 
by $b^{*}$ there is $b\in M$ satisfying $\psi(y)$  such that the value of $\phi(a^{*},b^{*})$ is different from that of $\phi(a^{*},b)$. Without loss of generality $\phi(a^{*},b^{*})$ is true. 

We now construct inductively $a_{n}, b_{n}\in M$ for $n=1,2,..$ such that
\newline
(i) $\models \phi(a_{i},b_{j})$ iff $i\leq j$,
\newline
(ii) $\models\neg\phi(a^{*}, b_{i})$ for all $i$,
\newline
(iii) $\models \phi(a_{i}, b^{*})$ for all $i$.
\newline
Suppose $a_{i}, b_{i}$ are constructed for $i\leq n$. As $tp_{\phi}(a^{*}/M^{*})$ is finitely satisfiable in $M$, choose $a_{n+1}\in M$ such that $\models \neg\phi(a_{n+1},b_{i})$ for $i\leq n$ and $\models \phi(a_{n+1},b^{*})$. 
Now using (*), let $b_{n+1}\in M$ be such that $\models\neg\phi(a^{*},b_{n+1})$ and $\models\phi(a_{i},b_{n+1})$ for $i\leq n+1$. So the construction can be carried out. (i) gives a contradiction to $\phi$ having not the order property in $M$. 
\end{proof}

The remaining material is more or less contained  in \cite{Ben-Yaacov}, although we spell some things out, especially Proposition 2.3 (c),  and offer some other proofs. 

\begin{Proposition} Conditions (a), (b) from Proposition 2.2. are also equivalent to each of
\newline
(c) Any $p(x)\in S_{\phi}(M)$ has an extension $p'(x)\in S_{\phi}(M^{*})$ which is both finitely satisfiable in and definable over $M$ where the $\phi$ definition of $p'$  is a $\phi^{*}$-formula over $M$,
\newline
(d) For any sequence $(a_{i})_{i<\omega}$ in $M$ there is a $\phi^{*}$-formula $\psi(y)$ over $M$ and a subsequence $(a_{j_{i}}:i < \omega)$ of the sequence $(a_{i})_{i}$ such that for every $b\in M^{*}$, the value of $\psi(b)$ equals the eventual value
of $\phi(a_{j_{i}},b)$.
\end{Proposition}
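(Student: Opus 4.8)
The plan is to treat Proposition 2.2, i.e.\ $(a)\Leftrightarrow(b)$, as known and to weave (c) and (d) into the cycle by proving $(a)\Rightarrow(c)$, $(c)\Rightarrow(b)$, $(b)\Rightarrow(d)$ and $(d)\Rightarrow(b)$. Two of these are soft. For $(a)\Rightarrow(c)$ I would fix $p\in S_{\phi}(M)$ and first note that $p$ is automatically finitely satisfiable in $M$: a finite conjunction of formulas in $p$ is consistent, so the corresponding existential sentence, having parameters in $M$, holds in $M$ by $M\preceq M^{*}$. Hence the family $\{\{a\in M:\models\phi(a,b)^{\epsilon_{b}}\}:b\in M\}$ cut out by $p$ has the finite intersection property; extending it to an ultrafilter on the $M$-definable subsets of $M$ gives a global type over $M^{*}$ that is finitely satisfiable in $M$ and restricts to $p$. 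Realising it by $a^{*}\in M^{**}$ with $tp(a^{*}/M^{*})$ finitely satisfiable in $M$, Remark 2.1 exhibits $q\mapsto\phi(a^{*},b)$ in the closure of $A$, so by (a) it is a $\phi^{*}$-formula $\psi$ over $M$; since then $\models\phi(a^{*},b)\leftrightarrow\psi(b)$ for all $b\in M^{*}$, $\psi$ is a $\phi^{*}$-definition of $p':=tp_{\phi}(a^{*}/M^{*})$, which is finitely satisfiable in $M$ and extends $p$. For $(d)\Rightarrow(b)$ I would mimic the easy direction of Proposition 2.2: from an order-property array $a_{i},b_{j}\in M$ with $\models\phi(a_{i},b_{j})\leftrightarrow i\le j$, apply (d) to $(a_{i})$ to obtain $(a_{j_{i}})$ and $\psi$; evaluating at $b_{k}$ forces $\models\neg\psi(b_{k})$ for all $k$, and by compactness there is $b\in M^{*}$ with $\models\neg\psi(b)$ and $\models\phi(a_{j_{i}},b)$ for all $i$, contradicting that $\psi(b)$ is the eventual value of $\phi(a_{j_{i}},b)$.

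For $(b)\Rightarrow(d)$ the real point is to produce a pointwise convergent subsequence, and I would reduce to a countable model. Choose $N\preceq M$ countable containing all $a_{i}$; then $\phi$ has no order property in $N$, while $S_{\phi}(N)$ is metrizable, so a subsequence $(a_{j_{i}})$ can be taken with $tp_{\phi}(a_{j_{i}}/N)$ convergent, i.e.\ with $\phi(a_{j_{i}},c)$ eventually constant for each $c\in N$. Thus the functions $\phi(a_{j_{i}},\cdot)$ converge on the dense set of realised $\phi^{*}$-types over $N$; by Proposition 2.2 applied to $N$ every cluster point of $(\phi(a_{j_{i}},\cdot))$ in $2^{S_{\phi^{*}}(N)}$ is continuous, and two continuous functions agreeing on a dense set are equal, so the sequence has a unique cluster point and converges on all of $S_{\phi^{*}}(N)$ to a $\phi^{*}$-formula $\psi$ over $N$. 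Finally, because every $a_{j_{i}}$ lies in $N$, the value $\phi(a_{j_{i}},b)$ depends only on $tp_{\phi^{*}}(b/N)$, so for every $b\in M^{*}$ the eventual value of $\phi(a_{j_{i}},b)$ is $\psi(b)$; this is (d) with $\psi$ a $\phi^{*}$-formula over $N\subseteq M$.

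The step I expect to be the main obstacle is $(c)\Rightarrow(b)$, which I would attack contrapositively: assuming the order property, exhibit a single $p\in S_{\phi}(M)$ with no extension that is at once finitely satisfiable in and $\phi^{*}$-definable over $M$. The candidate is the limit cut type $p=\lim_{\mathcal U}tp_{\phi}(a_{i}/M)$ of the ordering witnessing the order property, and one must rule out any $\phi^{*}$-definition $\psi$ of a finitely satisfiable extension. The subtlety is that failure can take two shapes, both of which have to be excluded: $p$ may admit no $\phi^{*}$-formula definition at all (the irrational-cut phenomenon, where a finite Boolean combination of instances has the wrong boundary), or it may admit only a degenerate one that vanishes on $M$ and hence, by $M\preceq M^{*}$, on all of $M^{*}$, so that the extension cannot remain finitely satisfiable once $M$ carries elements $\phi$-beyond every parameter (the $+\infty$ type of $(\Z,<)$). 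I would try to unify these by using that $\psi$ involves only finitely many parameters $e_{1},\dots,e_{m}$ while the order is infinite: along the witnessing sequence one seeks two parameters with the same $(\phi(e_{k},\cdot))_{k}$-pattern but lying on opposite sides of the ultrafilter $\mathcal W$ on $M$ attached to the finitely satisfiable extension, forcing $\psi$ to take both truth values on one pattern. Making this selection uniform in the a priori unknown $e_{k}$ is exactly where the work concentrates, and morally it is the local-stability fact that the limit type of an infinite $\phi$-order is never simultaneously definable and finitely satisfiable by a single instance-formula.
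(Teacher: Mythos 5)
Your reductions $(a)\Rightarrow(c)$, $(b)\Rightarrow(d)$ and $(d)\Rightarrow(b)$ are correct. In fact your $(b)\Rightarrow(d)$ --- passing to a countable elementary substructure $N$ to get metrizability of $S_{\phi}(N)$, extracting a subsequence convergent on the dense set of realized types, and then using continuity of all cluster points (Proposition 2.2 for $N$) to upgrade this to convergence on all of $S_{\phi^{*}}(N)$ --- is more careful than the paper's one-line ``(a) implies (d)'', which silently extracts a convergent subsequence in the possibly non-metrizable space $2^{X}$; it is essentially the argument the paper only spells out later in Remark 2.4(i). (If $L$ is uncountable you should, as there, take $N$ countable and elementary in the reduct of $M$ to $\phi$.)

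The genuine gap is $(c)\Rightarrow(b)$: you describe a strategy and explicitly stop at ``making this selection uniform in the a priori unknown $e_{k}$ is exactly where the work concentrates'', so the step that brings (c) into the equivalence is not proved. The difficulty is also real and your sketch underestimates it: condition (c) does not assert that the limit type $p=\lim_{\mathcal U}tp_{\phi}(a_{i}/M)$ has a \emph{unique} coheir, and $p$ certainly has \emph{some} finitely satisfiable global extension (namely $\lim_{\mathcal U}tp_{\phi}(a_{i}/M^{*})$), which may a priori differ from the definable-and-finitely-satisfiable $p'$ supplied by (c); so finding two parameters with the same $\phi(e_{k},\cdot)$-pattern on opposite sides of ``the'' ultrafilter does not by itself yield a contradiction. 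What must be exploited is that the \emph{same} extension $p'$ is simultaneously definable by a $\phi^{*}$-formula $\psi$ and finitely satisfiable in $M$: from $\models\neg\psi(b_{j})$ for all $j$ one gets by saturation some $b^{*}\in M^{*}$ with $\models\neg\psi(b^{*})$ and $\models\phi(a_{i},b^{*})$ for all $i$, and then one reruns the double induction from the proof of (b) implies (a) in Proposition 2.2, with finite satisfiability of $p'$ choosing the new $a$'s in $M$ and the failure of $\psi$ to isolate the behaviour at $b^{*}$ choosing the new $b$'s, thereby rebuilding the order property. The paper avoids all of this by proving $(c)\Rightarrow(a)$ instead: given $f$ in the closure, Remark 2.1 produces $a^{*}$ with $tp_{\phi}(a^{*}/M^{*})$ a coheir of $p=tp_{\phi}(a^{*}/M)$, one argues that this coheir must coincide with the $p'$ of (c), and then $f$ equals the defining $\phi^{*}$-formula $\psi$ and so is continuous. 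Either route would close your cycle, but as written the proposal leaves this implication open.
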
 
\begin{proof} (a) implies (c): Given $p\in S_{\phi}(M)$ let $(a_{i})_{i}$ be a net $N$ in $M$ such that $tp_{\phi}(a_{i}/M)$ converges to $p$ in the space $S_{\phi}(M)$.  By (a) there is a subnet $N'$ of this net such that the functions $\phi(a_{i},y)$ converge to some $\phi^{*}$-formula $\psi(y)$ over $M$. Remember this means that for all $b\in M^{*}$, the value of $\psi(b)$\  is the ``eventual on $N'$" value of $\phi(a_{i},b)$.
So we obtain a complete 
$\phi$-type $p'$ over $M^{*}$ as follows: for $b\in M^{*}$, $\phi(x,b)\in p'$ if  ``eventually on $N'$" $\phi(a_{i},b)$ iff $\psi(b)$, and $\neg\phi(x,b)\in p'$ if eventually on $N'$, $\neg\phi(a_{i},b)$ iff $\neg\psi(b)$. We see that $p'$ is finitely satisfiable in $M$, definable over $M$ by $\psi$, and extends $p$. 
\newline
(c) implies (a). Let $f\in 2^{X}$ be in the closure of the set of functions $\phi(a,y)$ for $a\in M$. Let $a^{*}$ be as in Remark 2.1. So $tp_{\phi}(a^{*}/M^{*})$ is finitely satisfiable in $M$. Let $p$ be the restriction to $M$ of this type. Then we claim that $tp_{\phi}(a^{*}/M^{*})$ has to coincide with the global $\phi$-type $p'$ from (c). This is because by symmetry Proposition 2.2 also holds with $\phi^{*}$ in place of $\phi$, whereby $p$ has a unique coheir over $M^{*}$. But then it is easy to see that the $\phi$-definition $\psi(y)$ of $tp_{\phi}(a^{*}/M^{*})$ has to coincide with $f$.
\newline
(a) implies (d) is immediate because the sequence of functions $\phi(a_{i},y)$ has a subsequence which converges in $2^{X}$ to a $\phi^{*}$-formula $\psi(y)$ over $M$ and this will do the job.
\newline
(d) implies (b): This is as in the proof of (a) implies (b) in Proposition 2.2.  Namely from an example $a_{i}, b_{i}$ in $M$ witnessing the order property, extract a subsequence $\phi(a_{i},y)$ of functions convergent to a formula and get a contradiction.
\end{proof}

\begin{Remark} (Assume the equivalent conditions (a)-(d).) 
\newline
(i) Given $p(x)\in S_{\phi}(M)$, let $\psi(y)$ be the $\phi$-definition of $p$ (and also of its global coheir $p'$). Then there is a sequence $(a_{i}:i<\omega)$ in $M$ such that for any $b\in M$, $\psi(b)$ holds iff eventually $\phi(a_{i},b)$ holds, and $\neg\psi(b)$ holds iff eventually $\neg\phi(a_{i},b)$ holds.
\newline
(ii)  The formula $\psi(y)$ from (i) is equivalent to a finite positive Boolean combination of formulas $\phi(a,y)$ for $a\in M$.
\end{Remark}
\begin{proof}
(i)  Let $M_{0}$ be a countable elementary substructure of the reduct of $M$ to $\phi(x,y)$ which contains the defining parameters of $\psi(y)$. 
Let $(a_{i}:i<\omega)$ be a sequence in $M_{0}$ such that $tp_{\phi}(a_{i}/M_{0})$ converges to $p_{0} = p|M_{0}$. As $\phi(x,y)$ does not have the order property in $M_{0}$, we may assume, from condition (d) in Proposition 2.3, that the formulas $\phi(a_{i},y)$ converge to the defining formula $\psi(y)$ of $p_{0}$ (so also of $p$ and $p'$) in the space $2^{S_{\phi^{*}}(M_{0})}$.  This suffices. 
\newline
(ii)  This follows from (i) by compactness. Specifically, in the saturated model $M^{*}$, $\models\psi(b)$ holds iff for some $n$, $\models\phi(a_{i},b)$ for all $i\geq n$, namely $\psi(y)$ is equivalent to a certain infinite disjunction of infinite conjunctions (of the $\phi(a_{i},y)$).  An easy compactness argument yields the equivalence of $\psi(y)$ with a finite subdisjunction of finite subconjunctions. 
\end{proof}

\end{document}